\theoremstyle{plain}
 \newtheorem{thm}{Theorem}[section]
 \newtheorem{cor}[thm]{Corollary}
 \newtheorem{lem}[thm]{Lemma}
 \newtheorem{prop}[thm]{Proposition}
\theoremstyle{definition}
\theoremstyle{remark}
\begin{document}
\title[on the conjecture of the norm Schwarz inequality.]
{on the conjecture of the norm Schwarz inequality.}

\author[Tomohiro Hayashi]{{Tomohiro Hayashi} }
\address[Tomohiro Hayashi]
{Nagoya Institute of Technology, 
Gokiso-cho, Showa-ku, Nagoya, Aichi, 466-8555, Japan}

\email[Tomohiro Hayashi]{hayashi.tomohiro@nitech.ac.jp}

\baselineskip=17pt

\maketitle

\begin{abstract}
For any positive invertible matrix $A$ and 
any normal matrix $B$ in $M_{n}({\Bbb C})$, 
we investigate whether the inequality 
$
||A\sharp (B^{*}A^{-1}B)||\geq ||B||
$ is 
true or not, 
where $\sharp$ denotes the geometric mean and 
$||\cdot||$ denotes the operator norm. 
We will solve this problem negatively. 
The related topics are also discussed.
\end{abstract}

\subjclass
{}

\keywords
{}

\section{Introduction}
In the paper \cite{A2} Ando considered the following problem. 
For three matrices $A,B,C$ with $A\geq0,C\geq0$, 
does
$
\begin{pmatrix}
A&B\\
B^{*}&C
\end{pmatrix}\geq0
$ imply 
$
||A\sharp C||\geq||B||
$? Here $A\sharp C$ is the geometric mean 
of $A$ and $C$. 
The inequality 
$
||A\sharp C||\geq||B||
$ was called 
the norm Schwarz inequality. 
In the case that $A$ is invertible, 
it is known that 
$
\begin{pmatrix}
A&B\\
B^{*}&C
\end{pmatrix}\geq0
$ if and only if 
$
C\geq B^{*}A^{-1}B
$. So the above problem is equivalent to the following. 
Is 
$
||A\sharp (B^{*}A^{-1}B)||\geq ||B||
$ 
always true for $A>0$? 
Ando showed that if $B$ satisfies this inequality 
for any $A$, then 
$B$ must be normaloid (i.e., $||B||=r(B)$ 
the maximum of eigenvalues of $B$). 
Then it is natural to expect that this norm 
inequality 
holds whenever $B$ is normal. 

\ \\

\noindent\underline{\bf Conjecture.} 
For any positive invertible matrix $A$ and 
any normal matrix $B$ in $M_{n}({\Bbb C})$, we have 
$$
||A\sharp (B^{*}A^{-1}B)||\geq ||B||.
$$

\ \\

Ando showed the following \cite{A2}. 
\begin{enumerate}
\item If $B$ is normaloid, the inequality 
$
||A^{\frac{1}{2}}(B^{*}A^{-1}B)^{\frac{1}{2}}||\geq ||B||
$ holds. 
\item 
If $B$ is self-adjoint, the conjecture is true. 
\item 
If $B$ is a scalar multiple of a unitary matrix, 
the conjecture is true. 
\item When $n=2$, the conjecture is true. 
\end{enumerate}

The aim of this paper is to construct a counter-example 
to this conjecture in $M_{6}({\Bbb C})$. 
For this purpose, we introduce some statements which are equivalent 
to the above conjecture. As a bonus,  we can show that 
if the above conjecture is true, then the 
inequality 
$$
A\sharp B^{-1}+B\sharp C^{-1}+C\sharp A^{-1}\geq3I,
$$ 
must hold for any positive invertible matrices 
$A$, $B$ and $C$. Then we can construct a counter-example for this inequality. 
The idea of constructing a counter-example 
for this inequality 
is basically due to M. Lin, who
attributed it to Drury \cite{DL}. 
In the final section 
we can give another proof of Ando's theorem for $2\times2$ matrices. 

After finishing this work  the author learned from Professor Minghua Lin that 
he succeeded in constructing a counter example to the above conjecture 
before us. His example consists of $3\times3$ matrices and so 
it is better than ours. The idea of construction is different. 

The author wishes to express his hearty gratitude to Professor Tsuyoshi Ando
for valuable comments. The author is also grateful to Professors Minghua Lin 
and Stephen Drury. The example in section 3 is due to them. 
The author thanks Professors 
Yoshihiro Nakamura, Muneo Cho and Takeaki Yamazaki 
for their comments. 

\section{Some equivalent conjectures}
Throughout this paper we denote by $M_{n}({\Bbb C})$ 
the space of $n\times n$ matrices. 
The geometric mean of two positive matrices $A,B\in M_{n}({\Bbb C})$ 
is denoted by $A\sharp B$. 
If they are invertible, we can write 
$
A\sharp B=
A^{\frac{1}{2}}
(A^{-\frac{1}{2}}BA^{-\frac{1}{2}})^{\frac{1}{2}}
A^{\frac{1}{2}}
$. For a matrix $A$ we denote 
its trace and determinant by 
${\rm Tr}(A)$ and ${\rm det}(A)$ respectively. 
We also denote the operator norm of a matrix $A$ by $||A||$.

First we introduce three conjectures: 

\ \\

\noindent\underline{\bf Conjecture 1.} 
(Ando, \cite{A2}) 
For any positive invertible matrix $A$ and 
any normal invertible matrix $B$ in $M_{n}({\Bbb C})$, 
we have 
$$
||A\sharp (B^{*}A^{-1}B)||\geq ||B||.
$$

\ \\

\noindent\underline{\bf Conjecture 2. }
For any positive invertible matrix $S$, 
any unitary matrix $U$ and any 
positive invertible matrix $D$ in $M_{n}({\Bbb C})$ 
with $UD=DU$, we have 
$$
||D^{\frac{1}{2}}\cdot S\sharp (U^{*}S^{-1}U)\cdot D^{\frac{1}{2}}||
\geq ||D||.
$$

\ \\

For a unitary matrix $U$ with the spectral decomposition 
$U=\displaystyle{\sum_{i}}z_{i}P_{i}$ 
($z_{i}\not=z_{j}$, $\{P_{i}\}_{i}$ are spectral projections), 
we set 
$$
E_{U}(X)=\displaystyle{\sum_{i}}P_{i}XP_{i}.
$$ 
With respect to the Hilbert-Schmidt inner product 
$\langle X|Y\rangle={\rm Tr}(X^{*}Y)$ on $M_{n}({\Bbb C})$, 
the map $E_{U}(\cdot)$ is 
the orthogonal projection to the commutant of U, that is, to
the class $\{X;XU = UX\}$. 
$E_{U}(\cdot)$ is a unital, 
trace-preserving, positive (hence contractive) linear map on 
$M_{n}({\Bbb C})$ such that 
$E_{U}(DX) = D\cdot E_{U}(X)$, 
$E_{U}(XD) = E_{U}(X) \cdot D$ for any $D\geq 0$ 
with $DU = UD$.

Here we remark that if $U^{k}=I$ for some positive integer $k$, 
the map $E_{U}$ can also be 
defined by 
$$
E_{U}(X)=\dfrac{1}{k}\displaystyle{\sum_{i=0}^{k-1}}{U^{*}}^{i}XU^{i}.
$$

\ \\

\noindent\underline{\bf Conjecture 3.}
For any positive invertible matrix $S$ and 
any unitary matrix $U$ in $M_{n}({\Bbb C})$, we have 
$$
E_{U}(S\sharp (U^{*}S^{-1}U))\geq I.
$$

\ \\

The main result in this section is 
the following.

\begin{thm}
All three conjectures above 
are mutually equivalent. 
\end{thm}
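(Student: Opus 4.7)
The plan is to establish the two equivalences $(1)\Leftrightarrow(2)$ and $(2)\Leftrightarrow(3)$. For the first, I rely on the polar decomposition of a normal invertible matrix, $B=UD=DU$ with $U$ unitary and $D=|B|>0$, together with the congruence identity $(TXT^{*})\sharp(TYT^{*})=T(X\sharp Y)T^{*}$ for the geometric mean (valid for any invertible $T$). Given $(A,B)$ as in Conjecture 1, I will set $S=D^{-1/2}AD^{-1/2}$; since $D^{1/2}$ commutes with $U$, the congruence identity (applied with $T=D^{1/2}$) rewrites $A\sharp(B^{*}A^{-1}B)$ as $D^{1/2}(S\sharp U^{*}S^{-1}U)D^{1/2}$, and $\|B\|=\|D\|$, so Conjecture 2 for $(S,U,D)$ yields Conjecture 1 for $(A,B)$. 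Running the substitution in reverse with $A=D^{1/2}SD^{1/2}$ and $B=UD$ (which is normal because $U$ and $D$ commute) handles the other direction.

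For $(3)\Rightarrow(2)$ I fix $S,U,D$ as above and set $T=S\sharp(U^{*}S^{-1}U)$. Because $D^{1/2}\in\{U\}'$, the stated bimodule property of $E_{U}$ gives $E_{U}(D^{1/2}TD^{1/2})=D^{1/2}E_{U}(T)D^{1/2}$, which by (3) is $\geq D^{1/2}\cdot I\cdot D^{1/2}=D$. Since $E_{U}$ is a unital positive map (hence a norm contraction) and the operator norm is monotone on positive operators, one obtains $\|D^{1/2}TD^{1/2}\|\geq \|E_{U}(D^{1/2}TD^{1/2})\|\geq \|D\|$, which is (2).

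The main obstacle is $(2)\Rightarrow(3)$: we must promote a family of norm inequalities to a full operator inequality. The idea is to specialize (2) to a $D$ that is essentially a rank-one projection onto an eigenvector of $U$. For any unit eigenvector $\eta$ of $U$, the projection $\eta\eta^{*}$ commutes with $U$ and hence lies in $\{U\}'$; since (2) requires $D$ invertible, I perturb it to $D_{\epsilon}=\eta\eta^{*}+\epsilon(I-\eta\eta^{*})\in\{U\}'$, which is positive invertible with $\|D_{\epsilon}\|=1$ for $0<\epsilon<1$. A direct calculation in the basis adapted to $\eta$ shows $D_{\epsilon}^{1/2}TD_{\epsilon}^{1/2}\to \langle T\eta,\eta\rangle\,\eta\eta^{*}$ as $\epsilon\to 0^{+}$, so $\|D_{\epsilon}^{1/2}TD_{\epsilon}^{1/2}\|\to \langle T\eta,\eta\rangle$. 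Applying (2) to each $D_{\epsilon}$ and letting $\epsilon\downarrow 0$ yields $\langle T\eta,\eta\rangle\geq 1$ for every unit eigenvector $\eta$ of $U$. Since every vector in the spectral subspace $\mathrm{range}(P_{j})$ of $U$ is an eigenvector, this is equivalent to $P_{j}TP_{j}\geq P_{j}$ for each $j$, and summing over $j$ produces $E_{U}(T)=\sum_{j}P_{j}TP_{j}\geq \sum_{j}P_{j}=I$, which is (3).
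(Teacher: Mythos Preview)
Your proof is correct and follows essentially the same strategy as the paper: the equivalence $(1)\Leftrightarrow(2)$ via polar decomposition and the congruence identity for $\sharp$ is identical, and for $(2)\Leftrightarrow(3)$ both you and the paper reduce Conjecture~3 to rank-one projections in $\{U\}'$ by squeezing an invertible $D$ down to such a projection (you via $D_{\epsilon}=\eta\eta^{*}+\epsilon(I-\eta\eta^{*})$ with $\epsilon\downarrow 0$, the paper via $D=e+\tfrac12(I-e)$ and powers $D^{n}$ with $n\to\infty$). Your argument for $(3)\Rightarrow(2)$ is in fact a touch cleaner than the paper's, since you use monotonicity of the norm on positive operators directly rather than first compressing by a spectral projection of $D$; but this is a cosmetic difference, not a different route.
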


\begin{proof}
\noindent(Conjecture 1 $\Rightarrow$ Conjecture 2) 
We set $B=UD=DU$ and $A=D^{\frac{1}{2}}SD^{\frac{1}{2}}$. 
Then we see that 
$$
A\sharp (B^{*}A^{-1}B)=(D^{\frac{1}{2}}SD^{\frac{1}{2}})
\sharp 
(D^{\frac{1}{2}}U^{*}S^{-1}UD^{\frac{1}{2}})
=D^{\frac{1}{2}}\cdot S\sharp (U^{*}S^{-1}U)\cdot 
D^{\frac{1}{2}}.
$$
Since $B$ is normal, applying Conjecture 1 we have 
$$
||D^{\frac{1}{2}}\cdot S\sharp (U^{*}S^{-1}U)
\cdot D^{\frac{1}{2}}||=
||A\sharp (B^{*}A^{-1}B)||\geq ||B||=||D||.
$$

\noindent(Conjecture 2 $\Rightarrow$ Conjecture 1) 
Take a polar decomposition $B=UD=DU$ 
with unitary $U$ and positive $D$ 
and set 
$S=D^{-\frac{1}{2}}AD^{-\frac{1}{2}}$. Then as shown 
above we have 
$
A\sharp (B^{*}A^{-1}B)
=D^{\frac{1}{2}}\cdot S\sharp (U^{*}S^{-1}U)
\cdot D^{\frac{1}{2}}
$ and hence Conjecture 2 implies Conjecture 1. 

\noindent(Conjecture 2 $\Rightarrow $Conjecture 3) 
It is enough to show that $e\cdot S\sharp (U^{*}S^{-1}U)\cdot e\geq e$ for 
any rank one projection $e$ with $Ue=eU$. 
Indeed, if $U$ has the spectral decomposition 
$U=\displaystyle{\sum_{i}}z_{i}P_{i}$ ($z_{i}\not=z_{j}$), then 
we can write 
$
E_{U}(X)=\displaystyle{\sum_{i}}P_{i}XP_{i}.
$ In order to show Conjecture 3, we have to show 
$P_{i}\cdot S\sharp (U^{*}S^{-1}U)\cdot P_{i}\geq P_{i}$ 
for each $i$. To do so, it is enough to show 
$e\cdot S\sharp (U^{*}S^{-1}U)\cdot e\geq e$ for 
any rank one projection $e\leq P_{i}$. 
Here we remark that a rank one projection $e$ satisfies $Ue=eU$ 
if and only if $e\leq P_{i}$ for some $i$. 

We set $D=e+\dfrac{1}{2}(I-e)$. Then by Conjecture 2 we have 
$$
||D^{\frac{n}{2}}\cdot S\sharp (U^{*}S^{-1}U)
\cdot D^{\frac{n}{2}}||\geq ||D^{n}||
$$ 
for any positive integer $n$. By tending $n\rightarrow \infty$ we have 
$$
||e\cdot S\sharp (U^{*}S^{-1}U)
\cdot e||\geq ||e||=1.
$$ 
Then since $e$ is a rank one projection, 
we conclude that 
$$
e\cdot S\sharp (U^{*}S^{-1}U)
\cdot e=
||e\cdot S\sharp (U^{*}S^{-1}U)
\cdot e||e\geq e.
$$

\noindent(Conjecture3$\Rightarrow$Conjecture2) 
We may assume $||D||=1$. Take a spectral projection $P$ of $D$ with 
$DP=P$. 
Notice that $P$ commutes with $U$. 
Then by Conjecture 3 we compute 
\begin{align*}
||D^{\frac{1}{2}}\cdot S\sharp (U^{*}S^{-1}U)
\cdot D^{\frac{1}{2}}||
&\geq 
||E_{U}(D^{\frac{1}{2}}\cdot S\sharp (U^{*}S^{-1}U)
\cdot D^{\frac{1}{2}})||\\
&\geq 
||P\cdot 
E_{U}(D^{\frac{1}{2}}\cdot S\sharp (U^{*}S^{-1}U)
\cdot D^{\frac{1}{2}})
\cdot P||\\
&=||PD^{\frac{1}{2}}\cdot 
E_{U}(S\sharp (U^{*}S^{-1}U))\cdot D^{\frac{1}{2}}P||\\
&=||P\cdot E_{U}(S\sharp U^{*}S^{-1}U)
\cdot P||\geq ||P||=1=||D||.
\end{align*}

\end{proof}

\begin{cor}
If Conjecture 1 is true in $M_{3n}({\Bbb C})$, then for any 
positive invertible matrices $A,B,C\in M_{n}({\Bbb C})$, 
we have 
$$
A\sharp B^{-1}+B\sharp C^{-1}+C\sharp A^{-1}\geq3I.
$$
\end{cor}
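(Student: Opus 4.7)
The plan is to exploit the equivalence with Conjecture 3 proved in the theorem, together with a clever cyclic block construction in $M_{3n}(\mathbb{C})$ that packages $A$, $B$, $C$ into one instance.

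First, I would take
$$
S=\begin{pmatrix} A & 0 & 0\\ 0 & B & 0\\ 0 & 0 & C \end{pmatrix},\qquad
U=\begin{pmatrix} 0 & 0 & I\\ I & 0 & 0\\ 0 & I & 0 \end{pmatrix},
$$
with the blocks of size $n\times n$. Then $S$ is positive invertible in $M_{3n}(\mathbb{C})$, $U$ is unitary, and $U^{3}=I$. A direct block multiplication gives $U^{*}SU=\mathrm{diag}(B,C,A)$ and hence $U^{*}S^{-1}U=\mathrm{diag}(B^{-1},C^{-1},A^{-1})$.

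Next, since the geometric mean of block diagonal positive matrices is computed blockwise (from $X\sharp Y=X^{1/2}(X^{-1/2}YX^{-1/2})^{1/2}X^{1/2}$), I obtain
$$
S\sharp(U^{*}S^{-1}U)=\mathrm{diag}\bigl(A\sharp B^{-1},\ B\sharp C^{-1},\ C\sharp A^{-1}\bigr).
$$
Since $U^{3}=I$, the conditional expectation has the explicit form $E_{U}(X)=\frac{1}{3}(X+U^{*}XU+U^{*2}XU^{2})$. Applied to a block-diagonal $X=\mathrm{diag}(X_{1},X_{2},X_{3})$, cyclic conjugation by $U$ permutes the blocks, so each diagonal block of $E_{U}(X)$ equals $\frac{1}{3}(X_{1}+X_{2}+X_{3})$. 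In particular,
$$
E_{U}\bigl(S\sharp(U^{*}S^{-1}U)\bigr)=\tfrac{1}{3}\bigl(A\sharp B^{-1}+B\sharp C^{-1}+C\sharp A^{-1}\bigr)\otimes I_{3},
$$
meaning the $3n\times 3n$ block diagonal matrix with this $n\times n$ matrix repeated on the diagonal.

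Finally, the hypothesis that Conjecture 1 holds in $M_{3n}(\mathbb{C})$ is, by the theorem, equivalent to Conjecture 3 in $M_{3n}(\mathbb{C})$, which yields $E_{U}(S\sharp(U^{*}S^{-1}U))\geq I_{3n}$. Reading off any diagonal block gives $\frac{1}{3}(A\sharp B^{-1}+B\sharp C^{-1}+C\sharp A^{-1})\geq I$, the desired inequality. The only nontrivial verifications are the two routine block identities — that conjugation by $U$ cyclically permutes diagonal blocks and that $\sharp$ respects block diagonal decompositions — so there is no real obstacle; the main content is simply choosing the right $S$ and $U$ so that $E_{U}$ symmetrizes the three geometric means into a scalar multiple of the identity.
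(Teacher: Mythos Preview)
Your proposal is correct and follows essentially the same approach as the paper: the same block-diagonal $S$ and cyclic permutation unitary $U$ with $U^{3}=I$, reduction to Conjecture~3 via the theorem, blockwise computation of $S\sharp(U^{*}S^{-1}U)$, and averaging under $E_{U}$ to obtain the symmetrized diagonal blocks. The paper's proof differs only in presentation, writing out the full $3\times 3$ block matrix of $E_{U}(S\sharp(U^{*}S^{-1}U))$ rather than your tensor shorthand.
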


\begin{proof} 
Denote by $M_{3}(M_{n}({\Bbb C}))$ the space of 
$3\times 3$ matrices with entries $M_{n}({\Bbb C})$. 
It is canonically identified with $M_{3n}({\Bbb C})$. 
We set 
$U=
\begin{pmatrix}
0&0&I_{n}\\
I_{n}&0&0\\
0&I_{n}&0
\end{pmatrix}
$ and 
$
S=
\begin{pmatrix}
A&0&0\\
0&B&0\\
0&0&C
\end{pmatrix}
$. By the previous theorem Conjecture 3 is also true. 
We will apply Conjecture 3 to these matrices. 

It is easy to see that 
$$
S\sharp (U^{*}S^{-1}U)
=
\begin{pmatrix}
A&0&0\\
0&B&0\\
0&0&C
\end{pmatrix}\sharp
\begin{pmatrix}
B^{-1}&0&0\\
0&C^{-1}&0\\
0&0&A^{-1}
\end{pmatrix}
=\begin{pmatrix}
A\sharp B^{-1}&0&0\\
0&B\sharp C^{-1}&0\\
0&0&C\sharp A^{-1}
\end{pmatrix}.
$$ 
Since $U^{3}=I$, 
\begin{align*}
&E_{U}(S\sharp U^{*}S^{-1}U)=
\dfrac{1}{3}\{S\sharp (U^{*}S^{-1}U)+
U^{*}\cdot S\sharp (U^{*}S^{-1}U)\cdot U+
{U^{*}}^{2}\cdot S\sharp (U^{*}S^{-1}U)\cdot U^{2}\}\\
&=\frac{1}{3}
\begin{pmatrix}
A\sharp B^{-1}+B\sharp C^{-1}+C\sharp A^{-1}&0&0\\
0&B\sharp C^{-1}+C\sharp A^{-1}+A\sharp B^{-1}&0\\
0&0&C\sharp A^{-1}+A\sharp B^{-1}+B\sharp C^{-1}
\end{pmatrix}.
\end{align*} 
Then using the assumption that Conjecture 3 is true, we get 
$$
\dfrac{A\sharp B^{-1}+B\sharp C^{-1}+C\sharp A^{-1}}{3}
\geq I. 
$$
\end{proof}

Therefore if we can find 
positive invertible matrices $A,B,C\in M_{n}({\Bbb C})$ which 
do not satisfy
$$
A\sharp B^{-1}+B\sharp C^{-1}+C\sharp A^{-1}\geq3I, 
\eqno{(\dagger)}
$$
we can conclude that Conjecture 1 
is not true in $M_{3n}({\Bbb C})$ and 
construct an explicit counter-example. 

Although we will construct a counter example to 
the conjecture in the next section, 
let us show that there are several evidences 
which support the validity of the conjecture. 
The following facts state that if we consider 
the trace in both sides of the inequalities, 
Conjecture 3 and the inequality $(\dagger)$ are true. 
\begin{prop}
\begin{enumerate} 
\item 
For any positive invertible matrix $S$ and 
any unitary matrix $U$ in $M_{n}({\Bbb C})$, we have 
$$
\dfrac{1}{n}{\rm Tr}(E_{U}(S\sharp (U^{*}S^{-1}U)))\geq 1.
$$

\item 
For any 
positive invertible matrices $A,B,C\in M_{n}({\Bbb C})$, 
we have 
$$
\dfrac{1}{n}{\rm Tr}(A\sharp B^{-1}+B\sharp C^{-1}+C\sharp A^{-1})
\geq3.
$$
\end{enumerate}
\end{prop}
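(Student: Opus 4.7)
My plan is to reduce both statements to the classical AM-GM inequality applied to the (positive) eigenvalues of the matrices involved, using the fact that the geometric mean of two positive invertible matrices has a determinant equal to the geometric mean of their determinants, i.e.\ $\det(X\sharp Y)=\sqrt{\det(X)\det(Y)}$.

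For (i), the first step is to use that $E_U$ is trace-preserving, so
$$\mathrm{Tr}\bigl(E_U(S\sharp(U^*S^{-1}U))\bigr)=\mathrm{Tr}\bigl(S\sharp(U^*S^{-1}U)\bigr).$$
Next I compute the determinant: since $\det(U^*S^{-1}U)=\det(S)^{-1}$, we get
$$\det\bigl(S\sharp(U^*S^{-1}U)\bigr)=\sqrt{\det(S)\cdot\det(S)^{-1}}=1.$$
The matrix $S\sharp(U^*S^{-1}U)$ is positive, so its eigenvalues $\lambda_1,\dots,\lambda_n$ are positive reals with product $1$, and the classical AM-GM inequality gives $\tfrac1n\sum_i\lambda_i\ge(\prod_i\lambda_i)^{1/n}=1$. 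This is exactly the claim.

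For (ii), I apply the same idea to each of the three summands. Writing $a=\det(A)$, $b=\det(B)$, $c=\det(C)$, one has
$$\det(A\sharp B^{-1})=\sqrt{a/b},\qquad \det(B\sharp C^{-1})=\sqrt{b/c},\qquad \det(C\sharp A^{-1})=\sqrt{c/a},$$
and AM-GM applied term by term gives $\tfrac1n\mathrm{Tr}(A\sharp B^{-1})\ge(a/b)^{1/(2n)}$, and similarly for the other two. Summing and applying AM-GM once more to the three positive numbers $(a/b)^{1/(2n)},(b/c)^{1/(2n)},(c/a)^{1/(2n)}$, whose product is $1$, yields
$$\tfrac1n\mathrm{Tr}(A\sharp B^{-1}+B\sharp C^{-1}+C\sharp A^{-1})\ge 3\sqrt[3]{1}=3,$$
which is the desired inequality.

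There is essentially no obstacle: the only nontrivial inputs are the multiplicativity of the determinant on the geometric mean (which is standard from the formula $A\sharp B=A^{1/2}(A^{-1/2}BA^{-1/2})^{1/2}A^{1/2}$) and the trace-preserving property of $E_U$ recalled in the paper. The role of $U$ in (i) and of the cyclic arrangement in (ii) is purely to force the product of determinants to equal $1$, after which scalar AM-GM finishes the argument.
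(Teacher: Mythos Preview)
Your proof is correct and follows essentially the same route as the paper's own argument: both use that $E_U$ is trace-preserving, that $\det(X\sharp Y)=\sqrt{\det X\det Y}$, and the AM--GM inequality $(\det X)^{1/n}\le\tfrac1n\mathrm{Tr}(X)$ applied to the eigenvalues (the paper phrases this last step via concavity of $\log$, which is the same thing). The structure of part (ii)---AM--GM on each summand followed by scalar AM--GM on the three resulting determinant ratios---is identical.
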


\begin{proof}
For a positive invertible matrix $X\in M_{n}({\Bbb C})$ 
with eigenvalues $\{\lambda_{1},\cdots,\lambda_{n}\}$, 
we observe by concavity of the function $\log t$, 
$$
\dfrac{1}{n}\log{\rm det}(X)
=\dfrac{1}{n}(\log\lambda_{1}+\cdots+\log\lambda_{n})
\leq \log\dfrac{1}{n}(\lambda_{1}+\cdots+\lambda_{n})
=\log\dfrac{1}{n}{\rm Tr}(X)
$$ 
and hence 
$$
({\rm det}(X))^{\frac{1}{n}}\leq \dfrac{1}{n}{\rm Tr}(X).
$$ 

\noindent(i) 
$
\dfrac{1}{n}{\rm Tr}(E_{U}(S\sharp (U^{*}S^{-1}U)))
=\dfrac{1}{n}{\rm Tr}(S\sharp (U^{*}S^{-1}U))\geq 
({\rm det}(S\sharp (U^{*}S^{-1}U)))^{\frac{1}{n}}=1.
$

\noindent(ii) 
\begin{align*}
\dfrac{1}{n}{\rm Tr}&(A\sharp B^{-1}+B\sharp C^{-1}+C\sharp A^{-1})
=\dfrac{1}{n}{\rm Tr}(A\sharp B^{-1})+
\dfrac{1}{n}{\rm Tr}(B\sharp C^{-1})+\dfrac{1}{n}{\rm Tr}(C\sharp A^{-1})\\
&\geq ({\rm det}(A\sharp B^{-1}))^{\frac{1}{n}}+
({\rm det}(B\sharp C^{-1}))^{\frac{1}{n}}+
({\rm det}(C\sharp A^{-1}))^{\frac{1}{n}}\\
&={\rm det}(A)^{\frac{1}{2n}}{\rm det}(B)^{-\frac{1}{2n}}
+{\rm det}(B)^{\frac{1}{2n}}{\rm det}(C)^{-\frac{1}{2n}}
+{\rm det}(C)^{\frac{1}{2n}}{\rm det}(A)^{-\frac{1}{2n}}\\
&\geq 
3\{
{\rm det}(A)^{\frac{1}{2n}}{\rm det}(B)^{-\frac{1}{2n}}
\times{\rm det}(B)^{\frac{1}{2n}}{\rm det}(C)^{-\frac{1}{2n}}
\times{\rm det}(C)^{\frac{1}{2n}}{\rm det}(A)^{-\frac{1}{2n}}
\}^{\frac{1}{3}}=3.
\end{align*}
Here we used the usual arithmetic-geometric inequality 
$
\dfrac{a+b+c}{3}\geq(abc)^{\frac{1}{3}}
$. 
\end{proof}

By the jointly concavity of the geometric mean \cite{A1}, 
we see that 
$$
\Bigl(
\frac{A+B+C}{3}\Bigr)
\sharp
\Bigl(\frac{B^{-1}+C^{-1}+A^{-1}}{3}\Bigr)\geq 
\frac{1}{3}(A\sharp B^{-1}+B\sharp C^{-1}+C\sharp A^{-1}).
$$
Thus if the inequality $(\dagger)$ is true, we must have 
$$
\Bigl(
\frac{A+B+C}{3}\Bigr)
\sharp
\Bigl(\frac{B^{-1}+C^{-1}+A^{-1}}{3}\Bigr)\geq I. 
\eqno{(\ddagger)}
$$ 

\begin{prop}
For any 
positive invertible matrices $A,B,C\in M_{n}({\Bbb C})$, 
the inequality $(\ddagger)$ is true. 
\end{prop}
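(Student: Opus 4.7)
The plan is to prove $(\ddagger)$ by combining two classical facts: the monotonicity of the geometric mean and the operator convexity of the matrix inversion map. Throughout, I will write $P := (A+B+C)/3$ and $Q := (A^{-1}+B^{-1}+C^{-1})/3$, so that the claim becomes $P \sharp Q \geq I$.

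The first move is to reduce the claim to a Löwner inequality between $Q$ and $P^{-1}$. The geometric mean is monotone in each argument, and a direct computation from the definition gives
$$P \sharp P^{-1} = P^{\frac{1}{2}}(P^{-\frac{1}{2}} P^{-1} P^{-\frac{1}{2}})^{\frac{1}{2}} P^{\frac{1}{2}} = I.$$
Hence, if one can establish $Q \geq P^{-1}$, then monotonicity yields $P \sharp Q \geq P \sharp P^{-1} = I$, which is exactly $(\ddagger)$.

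The remaining inequality
$$\frac{A^{-1}+B^{-1}+C^{-1}}{3} \;\geq\; \Bigl(\frac{A+B+C}{3}\Bigr)^{-1}$$
is precisely the operator convexity of the map $X \mapsto X^{-1}$ on the cone of positive invertible matrices, applied at the three points $A,B,C$ with equal weights $1/3$. Operator convexity of inversion is a standard fact; if desired it can be derived by iterating the two-point version $((X+Y)/2)^{-1}\leq (X^{-1}+Y^{-1})/2$, or via the integral representation of $-t^{-1}$, or from a Schur-complement argument on $\begin{pmatrix} X & I \\ I & X^{-1}\end{pmatrix}\geq 0$.

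I do not foresee any real obstacle: once the reduction in the first step is spotted, the proof collapses to a single application of operator convexity of inversion combined with monotonicity of $\sharp$. Note in particular that the argument does not invoke Conjecture 1 at any stage, which is consistent with the role of Proposition as supporting evidence rather than a consequence of the conjecture.
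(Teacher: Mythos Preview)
Your proof is correct, but it takes a different route from the paper's. The paper argues via the \emph{joint concavity} of the geometric mean: after reordering the second argument as $(A^{-1}+B^{-1}+C^{-1})/3$, joint concavity applied to the pairing $(A,A^{-1})$, $(B,B^{-1})$, $(C,C^{-1})$ immediately gives
\[
P\sharp Q \;\geq\; \tfrac{1}{3}\bigl(A\sharp A^{-1}+B\sharp B^{-1}+C\sharp C^{-1}\bigr)=I.
\]
Your argument instead factors through the arithmetic--harmonic mean inequality for matrices (operator convexity of $X\mapsto X^{-1}$) to obtain $Q\geq P^{-1}$, and then invokes only the \emph{monotonicity} of $\sharp$ in its second variable. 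Both proofs are short and rest on standard properties; the paper's is a one-line application of a single (stronger) property, while yours trades joint concavity for two more elementary ingredients. Either way, the argument is independent of Conjecture~1, as you note.
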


\begin{proof}
This is also a direct consequence from  the 
jointly concavity of the geometric mean. Indeed 
\begin{align*}
\Bigl(
\frac{A+B+C}{3}\Bigr)
\sharp
\Bigl(\frac{B^{-1}+C^{-1}+A^{-1}}{3}\Bigr)
&=\Bigl(
\frac{A+B+C}{3}\Bigr)
\sharp
\Bigl(\frac{A^{-1}+B^{-1}+C^{-1}}{3}\Bigr)\\
&\geq \frac{1}{3}(A\sharp A^{-1}+B\sharp B^{-1}+C\sharp C^{-1})
=3I.
\end{align*}
\end{proof}

Finally we would like to point out the following fact. 
For any 
positive invertible matrices $A,B\in M_{n}({\Bbb C})$, 
we can easily see that 
$$
A\sharp B^{-1}+B\sharp A^{-1}=
(A\sharp B^{-1})+(A\sharp B^{-1})^{-1}
\geq2.
$$

\section{a counter-example to the conjecture.} 

In this section we will construct a counter-example 
to Conjecture 1. 
This example is due to Professors Minghua Lin 
and Stephen Drury \cite{DL}. We would like to thank them. 

In the inequality 
$$
A\sharp B^{-1}+B\sharp C^{-1}+C\sharp A^{-1}\geq3I
$$ 
if we set $A=X^{2}$, $B=Y^{-2}$ and $C=I$ we obtain 
$$
X^{2}\sharp Y^{2}+X^{-1}+Y^{-1}\geq3I.
$$
We will show that there are 
two positive-definite matrices 
$X$ and $Y$ such that they do not satisfy this inequality. 
This means that there are $6\times6$ matrices 
which do not satisfy Conjecture 1.

The following fact is well-known for the specialists. 
We include its proof  for completeness. 
\begin{lem}
For $2\times2$ matrices $X>0$ and $Y>0$, we have 
$$
X\sharp Y=
\dfrac{({\rm det}(X){\rm det}(Y))^{\frac{1}{4}}}
{{\rm det}\Bigl(\frac{1}{\sqrt{{\rm det}(X)}}X+
\frac{1}{\sqrt{{\rm det}(Y)}}Y
\Bigr)^{\frac{1}{2}}}
\Bigl(\frac{1}{\sqrt{{\rm det}(X)}}X+
\frac{1}{\sqrt{{\rm det}(Y)}}Y\Bigr).
$$ 
In particular if
${\rm det}(X)={\rm det}(Y)$, we have 
$$
X\sharp Y=\sqrt{
\dfrac{{\rm det}(X)}
{{\rm det}(X+Y)}}
(X+Y).
$$

\end{lem}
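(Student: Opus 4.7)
The plan is to reduce to the normalized case and then verify the formula by using the Riccati characterization of the geometric mean.

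First I would observe that the geometric mean enjoys the scaling identity $(\alpha A)\sharp(\beta B)=\sqrt{\alpha\beta}\,(A\sharp B)$ for positive scalars $\alpha,\beta$. Setting $\alpha=1/\sqrt{\det(X)}$, $\beta=1/\sqrt{\det(Y)}$ and $\widetilde X=\alpha X$, $\widetilde Y=\beta Y$, we have $\det(\widetilde X)=\det(\widetilde Y)=1$ and
\[
X\sharp Y \;=\; (\det(X)\det(Y))^{1/4}\cdot(\widetilde X\sharp\widetilde Y).
\]
Thus the general statement follows once we establish the normalized version: for $X,Y>0$ with $\det(X)=\det(Y)=1$,
\[
X\sharp Y \;=\; \frac{X+Y}{\sqrt{\det(X+Y)}}.
\]

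To prove this normalized identity, I would use the fact that $X\sharp Y$ is the unique positive matrix $M$ solving the Riccati equation $MX^{-1}M=Y$. Put $M=c(X+Y)$ with $c=1/\sqrt{\det(X+Y)}$, which is positive since $X+Y>0$. A direct expansion gives
\[
MX^{-1}M \;=\; c^2\bigl(X+2Y+YX^{-1}Y\bigr),
\]
so the problem reduces to verifying
\[
X+YX^{-1}Y \;=\; \bigl(\det(X+Y)-2\bigr)\,Y.
\]

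At this point the $2\times 2$ Cayley--Hamilton identities take over, and this is the only delicate step. Since $\det(X)=1$, one has $X+X^{-1}=\operatorname{Tr}(X)I$ and $X^{-1}=\operatorname{Tr}(X)I-X$; likewise for $Y$. Using the $2\times 2$ determinant formula $\det(X+Y)=\det(X)+\det(Y)+\operatorname{Tr}(\operatorname{adj}(X)Y)$ together with $\operatorname{adj}(X)=X^{-1}$ (again because $\det(X)=1$), one obtains $\det(X+Y)=2+\operatorname{Tr}(X^{-1}Y)$, so the desired equality becomes
\[
X+YX^{-1}Y \;=\; \operatorname{Tr}(X^{-1}Y)\,Y.
\]
Multiplying both sides on the left by $Y^{-1}$ and setting $W=X^{-1}Y$, this is equivalent to $W+W^{-1}=\operatorname{Tr}(W)I$, which is nothing but the Cayley--Hamilton relation for the $2\times 2$ matrix $W$ — and it holds because $\det(W)=\det(X)^{-1}\det(Y)=1$. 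This is the crux of the argument; everything else is bookkeeping.

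The particular case $\det(X)=\det(Y)$ then follows immediately from the general formula by noting that the two scalar factors $1/\sqrt{\det(X)}$ and $1/\sqrt{\det(Y)}$ become equal and can be pulled out. I do not anticipate any real obstacle beyond recognizing that the Riccati characterization combined with the two $2\times 2$ Cayley--Hamilton tricks collapses the problem to the triviality $\det(X^{-1}Y)=1$.
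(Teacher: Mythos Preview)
Your argument is correct, but it follows a genuinely different route from the paper's. The paper applies Cayley--Hamilton directly to the square-root matrix $(X^{-1/2}YX^{-1/2})^{1/2}$; multiplying the resulting quadratic relation by $X^{1/2}$ on both sides yields a linear identity among $X$, $Y$ and $X\sharp Y$, and the unknown coefficient is then read off by taking determinants. In contrast, you first normalize via the scaling law, then \emph{guess} the candidate $M=(X+Y)/\sqrt{\det(X+Y)}$ and verify it through the Riccati equation $MX^{-1}M=Y$, reducing to Cayley--Hamilton for $W=X^{-1}Y$ together with the $2\times 2$ identity $\det(X+Y)=\det(X)+\det(Y)+\operatorname{Tr}(\operatorname{adj}(X)\,Y)$. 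The paper's approach is more constructive---it \emph{derives} the linear form rather than positing it---and needs only a single Cayley--Hamilton step with no auxiliary determinant identity. Your approach, on the other hand, avoids ever manipulating the square root $(X^{-1/2}YX^{-1/2})^{1/2}$ directly and makes the uniqueness of the geometric mean (via Riccati) do the work; this is arguably cleaner conceptually, at the cost of importing two separate $2\times 2$ tricks.
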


\begin{proof}
Applying the Cayley-Hamilton theorem to the matrix 
$(X^{-\frac{1}{2}}YX^{-\frac{1}{2}})^{\frac{1}{2}}$ 
we have 
$$
X^{-\frac{1}{2}}YX^{-\frac{1}{2}}-
{\rm Tr}((X^{-\frac{1}{2}}YX^{-\frac{1}{2}})^{\frac{1}{2}})
(X^{-\frac{1}{2}}YX^{-\frac{1}{2}})^{\frac{1}{2}}+
\Bigl(\frac{{\rm det}(Y)}{{\rm det}(X)}\Bigr)^{\frac{1}{2}}=0.
$$
By multiplying $X^{\frac{1}{2}}$ from both sides we see that 
$$
Y-{\rm Tr}((X^{-\frac{1}{2}}YX^{-\frac{1}{2}})^{\frac{1}{2}})
X\sharp Y+
\Bigl(\frac{{\rm det}(Y)}{{\rm det}(X)}\Bigr)^{\frac{1}{2}}X=0. 
$$
Hence we can write 
$$
X\sharp Y=
c
\Bigl(\frac{1}{\sqrt{{\rm det}(X)}}X+
\frac{1}{\sqrt{{\rm det}(Y)}}Y\Bigr).
$$
By taking the determinants we have 
$$
({\rm det}(X){\rm det}(Y))^{\frac{1}{2}}=c^{2}
{\rm det}\Bigl(\dfrac{1}{\sqrt{{\rm det}(X)}}X+
\frac{1}{\sqrt{{\rm det}(Y)}}Y\Bigr).
$$
So we are done. 
\end{proof}

Set 
$$
X=\dfrac{1}{5^{2}}
\begin{pmatrix}
50&5\\
5&1
\end{pmatrix},\ \ 
Y=\dfrac{1}{5^{2}}
\begin{pmatrix}
50&-5\\
-5&1
\end{pmatrix},\ \ 
P=
\begin{pmatrix}
1&0\\
0&0
\end{pmatrix}
.
$$
Here we remark that 
${\rm det}(X)={\rm det}(Y)=\dfrac{1}{5^{2}}$ and 
$$
X^{2}=\dfrac{1}{5^{4}}
\begin{pmatrix}
2525&255\\
255&26
\end{pmatrix},\ \ 
Y^{2}=\dfrac{1}{5^{4}}
\begin{pmatrix}
2525&-255\\
-255&26
\end{pmatrix}.
$$
By the previous lemma we know that 
$$
X^{2}\sharp Y^{2}=\sqrt{
\dfrac{{\rm det}(X^{2})}
{{\rm det}(X^{2}+Y^{2})}}
(X^{2}+Y^{2}).
$$ 
Since 
$
X^{2}+Y^{2}=
\dfrac{1}{5^{4}}
\begin{pmatrix}
5050&0\\
0&52
\end{pmatrix}
$, we compute 
$$
P(X^{2}\sharp Y^{2})P
=\dfrac{\frac{1}{5^{2}}}
{\frac{1}{5^{4}}(5050\times52)^{\frac{1}{2}}}\times 
\dfrac{5050}{5^{4}}P
=
\sqrt{\dfrac{101}{650}}P.
$$
Since 
$$
X^{-1}=
\begin{pmatrix}
1&-5\\
-5&50
\end{pmatrix},\ \ 
Y^{-1}=
\begin{pmatrix}
1&5\\
5&50
\end{pmatrix},
$$
we see that 
$$
P(X^{2}\sharp Y^{2}+X^{-1}+Y^{-1})P
=\sqrt{\dfrac{101}{650}}P
+2P<3P.
$$
Therefore we conclude that the matrices $X$ and 
$Y$ do not satisfy the inequality 
$$
X^{2}\sharp Y^{2}+X^{-1}+Y^{-1}\geq3I.
$$

\section{the conjecture for $2\times2$ matrices} 
In \cite{A2} Ando showed that Conjecture 1 is true for 
$2\times2$-matrices. In this section we will give another 
proof for this result. 
In section 2 we saw that Conjecture 1 is equivalent to 
Conjecture 3. 
Thus it is enough to show the following. 

\begin{thm}
For any positive invertible $2\times2$ matrix $S$ and 
any unitary $2\times2$ matrix $U$, we have 
$$
E_{U}(S\sharp (U^{*}S^{-1}U))\geq I.
$$
\end{thm}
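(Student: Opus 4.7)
The strategy is to invoke the lemma from Section 3, which provides an essentially linear formula for $X\sharp Y$ in the $2\times 2$ case, together with the reduction to Conjecture 3 already performed in the excerpt.

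First, I would normalize. Since the geometric mean is jointly positively homogeneous of degree one, $(\lambda A)\sharp(\mu B)=\sqrt{\lambda\mu}\,A\sharp B$, and in particular the substitution $S\mapsto S/\sqrt{\det S}$ leaves $S\sharp(U^{*}S^{-1}U)$ unchanged. Thus I may assume $\det S=1$, hence also $\det(U^{*}S^{-1}U)=1$. The scalar case $U=zI$ is immediate: there $U^{*}S^{-1}U=S^{-1}$, so $S\sharp(U^{*}S^{-1}U)=I$ and $E_{U}$ is the identity map, giving equality.

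In the remaining case, $U$ has two distinct eigenvalues $z_{1}\neq z_{2}$ with rank-one spectral projections $P_{1},P_{2}$. I would work in an orthonormal basis in which $U=\mathrm{diag}(z_{1},z_{2})$, so that $E_{U}(X)=P_{1}XP_{1}+P_{2}XP_{2}$ simply erases the off-diagonal entries of $X$. Writing $S=\begin{pmatrix} a & b \\ \bar{b} & c \end{pmatrix}$ with $ac-|b|^{2}=1$, a direct computation of $U^{*}S^{-1}U$ yields
$$
S+U^{*}S^{-1}U=\begin{pmatrix} a+c & b(1-\bar{z_{1}}z_{2}) \\ \bar{b}(1-z_{1}\bar{z_{2}}) & a+c \end{pmatrix}.
$$
Because $\det S=\det(U^{*}S^{-1}U)=1$, the equal-determinant case of the Section 3 lemma gives
$$
S\sharp(U^{*}S^{-1}U)=\frac{S+U^{*}S^{-1}U}{\sqrt{\det(S+U^{*}S^{-1}U)}}.
$$

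The computation then finishes itself. The diagonal entries of $S+U^{*}S^{-1}U$ are both equal to $a+c$, so $E_{U}(S+U^{*}S^{-1}U)=(a+c)I$; meanwhile $\det(S+U^{*}S^{-1}U)=(a+c)^{2}-|b|^{2}|1-\bar{z_{1}}z_{2}|^{2}\leq (a+c)^{2}$. Dividing gives
$$
E_{U}\bigl(S\sharp(U^{*}S^{-1}U)\bigr)=\frac{a+c}{\sqrt{(a+c)^{2}-|b|^{2}|1-\bar{z_{1}}z_{2}|^{2}}}\;I\geq I,
$$
which is the desired inequality. I do not foresee a real obstacle: once the Section 3 lemma is invoked, $S\sharp(U^{*}S^{-1}U)$ is a scalar multiple of $S+U^{*}S^{-1}U$, and in the diagonalizing basis of $U$ the map $E_{U}$ merely reads off the diagonal, so the whole statement collapses to the trivial observation $|b|^{2}|1-\bar{z_{1}}z_{2}|^{2}\geq 0$. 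The only care needed is in recording the homogeneity reduction and in handling the degenerate case $U=zI$ at the outset.
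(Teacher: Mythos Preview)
Your proof is correct and follows essentially the same route as the paper: normalize to $\det S=1$, diagonalize $U$, invoke the $2\times 2$ geometric-mean lemma (Lemma~3.1) to write $S\sharp(U^{*}S^{-1}U)$ as a scalar multiple of $S+U^{*}S^{-1}U$, and then compare the scalar $(a+c)/\sqrt{\det(S+U^{*}S^{-1}U)}$ with $1$. The only cosmetic difference is that the paper normalizes the first eigenvalue of $U$ to $1$ and expands the determinant to $a^{2}+c^{2}+2(1+|b|^{2}\mathrm{Re}\,z)$ before checking $(a+c)^{2}-\det(S+U^{*}S^{-1}U)=2|b|^{2}(1-\mathrm{Re}\,z)\geq 0$, whereas you keep both eigenvalues and read off $\det(S+U^{*}S^{-1}U)=(a+c)^{2}-|b|^{2}|1-\bar z_{1}z_{2}|^{2}\leq (a+c)^{2}$ directly.
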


\begin{proof} 
Without loss of generality we may 
assume that $U$ is a diagonal matrix of the form 
$U=
\begin{pmatrix}
1&0\\
0&z
\end{pmatrix}
$ with $|z|=1$ because 
$(wU)^{*}S^{-1}(wU)=U^{*}S^{-1}U$ 
for any complex number $w$ with $|w|=1$. 
In the case that $z=1$, $U$ becomes identity 
and so the statement is obvious. 
Therefore we have only to consider the case 
$z\not=1$ and $U\not=I$. 
Here we remark that in this case the map 
$E_{U}$ is defined by 
$$
E_{U}(
\begin{pmatrix}
x&y\\
z&w
\end{pmatrix}
)=
\begin{pmatrix}
x&0\\
0&w
\end{pmatrix}.
$$ 
We can also assume that 
$
S=
\begin{pmatrix}
a&b\\
\overline{b}&c
\end{pmatrix}
$ 
with 
$
{\rm det}(S)=ac-|b|^{2}=1
$ since 
$
(\alpha S)\sharp \{U^{*}(\alpha S)^{-1}U\}
=S\sharp (U^{*}S^{-1}U)
$ 
for any positive number $\alpha$. Then we see that 
$$
S^{-1}=
\begin{pmatrix}
c&-b\\
-\overline{b}&a
\end{pmatrix},\ \ 
U^{*}S^{-1}U=
\begin{pmatrix}
c&-bz\\
-\overline{bz}&a
\end{pmatrix},\ \ 
S+U^{*}S^{-1}U=
\begin{pmatrix}
a+c&b(1-z)\\
\overline{b(1-z)}&a+c
\end{pmatrix}.
$$
Then we compute 
$$
{\rm det}(S+U^{*}S^{-1}U)
=(a+c)^{2}-|b(1-z)|^{2}
=2(ac-|b|^{2})+a^{2}+c^{2}+
2|b|^{2}{\rm Re}z=a^{2}+c^{2}+
2(1+|b|^{2}{\rm Re}z).
$$
Then since ${\rm det}(S)={\rm det}(U^{*}S^{-1}U)=1$, 
by lemma 3.1 we have 
\begin{align*}
S\sharp U^{*}S^{-1}U&=
\sqrt{
\dfrac{{\rm det}(S)}
{{\rm det}(S+U^{*}S^{-1}U)}}
(S+U^{*}S^{-1}U)\\
&=\dfrac{1}{\sqrt{a^{2}+c^{2}+
2(1+|b|^{2}{\rm Re}z)}}
\begin{pmatrix}
a+c&b(1-z)\\
\overline{b(1-z)}&a+c
\end{pmatrix}
\end{align*} 
and hence 
$$
E_{U}(S\sharp U^{*}S^{-1}U)=
\dfrac{1}{\sqrt{a^{2}+c^{2}+
2(1+|b|^{2}{\rm Re}z)}}
\begin{pmatrix}
a+c&0\\
0&a+c
\end{pmatrix}.
$$
On the other hand we see that  
\begin{align*}
(a+c)^{2}&-\{a^{2}+c^{2}+
2(1+|b|^{2}{\rm Re}z)\}=
2\{(ac-1)-|b|^{2}{\rm Re}z\}\\
&=2(|b|^{2}-|b|^{2}{\rm Re}z)
=2|b|^{2}(1-{\rm Re}z)\geq0.
\end{align*}
Here we used the fact that $ac-1=|b|^{2}$. 
Therefore we conclude 
$$
E_{U}(S\sharp U^{*}S^{-1}U)\geq I.
$$
\end{proof}


\begin{thebibliography}{99}
\bibitem{A1} 
T.~Ando, {\it Concavity of certain maps on positive definite matrices and applications to Hadamard products.} Linear Algebra Appl. {\bf 26} (1979), 203--241. 


\bibitem{A2} 
\underline{\phantom{aaaaa}}, {\it 
Geometric mean and norm Schwarz inequality.} 
Ann. Funct. Anal. {\bf 7} (2016), no. 1, 1--8. 

\bibitem{DL} S.~Drury and M.~Lin, private communications. 

\end{thebibliography}
\end{document}